\newtheorem{thm}[subsection]{Theorem}
\newtheorem{cor}[subsection]{Corollary}
\newtheorem{prop}[subsection]{Proposition}
\newtheorem{lem}[subsection]{Lemma}
\newcommand{\del}[2]{{}}
\newcommand{\Z}{\mathbb Z}
\newcommand{\Q}{\mathbb Q}
\newcommand{\cN}{\mathcal N}
\newcommand{\lb}{b^{(2)}}
\newcommand{\cd}{\mathrm{cd}}
\title{Simple groups with strong fixed-point properties}
\author{Nansen Petrosyan}
\address{School of Mathematics, University of Southampton, Southampton SO17~1BJ, UK}
\email{n.petrosyan@soton.ac.uk}
\thanks{}
\subjclass{}
\date{\today}
\begin{document}

\begin{abstract}
We exhibit finitely generated torsion-free groups for which any action on any finite-dimensional CW-complex with finite Betti numbers has a global fixed point.
\end{abstract}

\maketitle

\section{Introduction}

In \cite{FFS}, Fournier-Facio and Sun introduced a method of constructing, so called, relative torsion-free Tarski monsters with strong homological control. 
In this note, we apply their construction to obtain examples of groups with extreme fixed-point properties.

We will say that a group $G$ is a {\it torsion-free Tarski monster} if $G$ is a torsion-free infinite simple group all of whose proper subgroups are cyclic. 

\begin{thm}{\label{main}}There are uncountably many pairwise non-isomorphic finitely generated torsion-free Tarski monsters $G$ such that any action of $G$ on any finite dimensional  CW-complex  with finite Betti numbers has a global fixed point. 

\end{thm}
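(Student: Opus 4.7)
The plan is to apply the Fournier--Facio--Sun construction from \cite{FFS} to produce the required family and then verify the fixed-point property by a comparison of two spectral sequences on the Borel construction. The natural input to the FFS machine is the class of infinite cyclic groups, so that the resulting $G$ is a torsion-free Tarski monster: every proper subgroup is trivial or infinite cyclic. Simultaneously, one exploits the ``strong homological control'' to prescribe the rational cohomology $H^*(G;\Q)$ to be sufficiently large in a specific sense made precise below. The uncountability and pairwise non-isomorphism of the resulting family follow from a standard Olshanskii-style cardinality argument, since the FFS construction has a continuum of parameters while there are only countably many finitely presented groups.

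For a single such $G$, suppose for contradiction that $G$ acts on a finite-dimensional CW-complex $X$ with $b_i(X)<\infty$ for all $i$ but without a global fixed point. After passing to the barycentric subdivision the action is cellular, and every cell stabilizer $G_\sigma$ is a proper subgroup of $G$, hence trivial or infinite cyclic, and so has rational cohomological dimension at most one. The isotropy spectral sequence for the Borel construction,
\[
E_1^{p,q}=\bigoplus_{\sigma\in (X/G)^{(p)}} H^q(G_\sigma;\Q)\;\Rightarrow\;H^{p+q}(EG\times_G X;\Q),
\]
is then supported in the strip $0\le p\le \dim X$, $0\le q\le 1$, so $H^m(EG\times_G X;\Q)$ vanishes for every $m>\dim X+1$.

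On the other hand, since every proper subgroup of $G$ is cyclic, $G$ contains no non-abelian free subgroup; the Tits alternative therefore forces every homomorphism $G\to \mathrm{GL}_m(\Q)$ to have non-trivial kernel, and simplicity of $G$ then forces this kernel to be all of $G$. Hence $G$ acts trivially on each finite-dimensional $H^q(X;\Q)$, and the Leray--Serre spectral sequence of the Borel fibration $X\to EG\times_G X\to BG$ has
\[
E_2^{p,q}=H^p(G;\Q)\otimes H^q(X;\Q).
\]
Since the abutment vanishes in degrees greater than $\dim X+1$, a standard analysis of the column $q=0$ (whose outgoing differentials vanish for dimensional reasons) yields the bounded-depth recurrence
\[
\dim H^p(G;\Q)\;\le\;\sum_{r=2}^{\dim X+1} b_{r-1}(X)\cdot \dim H^{p-r}(G;\Q)\qquad (p>\dim X+1).
\]
The main obstacle, and the step where the homological control of \cite{FFS} is essential, is to arrange the input data so that $H^*(G;\Q)$ violates every such recurrence simultaneously: for instance, by guaranteeing that $\dim H^p(G;\Q)$ is infinite in arbitrarily high degrees while the lower-degree cohomology remains finite-dimensional. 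This is precisely the type of condition FFS can impose, and it supplies the contradiction.
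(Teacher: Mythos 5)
Your overall skeleton is the same as the paper's: produce FFS-type monsters with cyclic proper subgroups, kill the action on rational homology by linearity of $\mathrm{GL}(b_i,\Q)$ plus simplicity (the paper uses Mal'cev residual finiteness rather than the Tits alternative, but both work), and then play the isotropy spectral sequence of the Borel construction (abutment supported below $\dim X+1$) against the Leray--Serre/second spectral sequence with a prescribed ``infinite in high degrees, small in a long window below'' pattern for the cohomology of $G$. However, the step you flag as ``precisely the type of condition FFS can impose'' is exactly where the proposal has a genuine gap, and your proposed input data makes it unfixable as stated. First, the FFS theorem requires the input groups $L_k$ to be torsion-free \emph{non-elementary} hyperbolic groups; infinite cyclic groups are not admissible inputs, and the cyclic structure of proper subgroups of the monster is an output of the construction, not something you feed in. Worse, the homological control of FFS is the isomorphism $H_i(M;A)\cong\bigoplus_k H_i(L_k;A)$ for $i\ge 3$, so if the $L_k$ had trivial high-dimensional homology (as cyclic groups do) the monster would satisfy $H_i(M;\Q)=0$ for $i\ge 3$ --- the opposite of the largeness your recurrence argument needs. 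Second, even after replacing the inputs by suitable hyperbolic groups, you have given no source of hyperbolic groups whose \emph{ordinary} rational homology is concentrated in a single high degree with a long vanishing window below it (essentially hyperbolic rational homology spheres of arbitrarily large dimension), and no such supply is known. The paper avoids this entirely: it uses even-dimensional arithmetic hyperbolic lattices, whose ordinary Betti numbers are not concentrated but whose $L^2$-Betti numbers are concentrated in the middle dimension, and it runs your spectral-sequence comparison with von Neumann dimension $\dim_{\cN(M)}$ instead of $\dim_\Q$; this in turn forces the extra ingredient you do not have, namely L\"uck approximation for virtually locally indicable groups (\Cref{cor:betti_approx}) to control $\lb_*(L_k\twoheadrightarrow M)$, since the relevant coefficients are $\cN(M)$-modules through the quotient map rather than trivial coefficients.

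A second, smaller but still genuine, gap is the uncountability claim. The monsters are finitely generated but not finitely presented, and there are uncountably many finitely generated groups, so ``continuum of parameters versus countably many finitely presented groups'' proves nothing; one must exhibit an invariant separating the groups. The paper does this by choosing the gap sequence $S$ and showing that the set of degrees $j$ with $\lb_j(M_S)=\infty$ recovers $S$ (infinite exactly on $S$, at most $1$ off $S$ in degrees $\ge 3$), so distinct sequences give non-isomorphic groups. Your proposal would need an analogous invariant computation, which again runs into the problem of controlling the homology of the monster in terms of admissible hyperbolic inputs.
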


As usual, here and throughout, Betti numbers are taken over $\mathbb Q$.
Since finite domination,  acyclicity and homotopy equivalence to a compact manifold or finite complex all imply finiteness of Betti numbers, \Cref{main} applies to a broad class of topological spaces commonly studied in geometric topology and group actions.

The study of fixed-point properties for group actions on such spaces has a rich history. A classical result of Smith theory \cite{PASmith41} shows that a finite $p$-group acting on a finite-dimensional mod-$p$ acyclic CW-complex must have a fixed point. The first infinite groups with similar fixed-point properties were constructed in \cite{arz09}. Since these constructions rely on torsion, they do not apply to torsion-free groups.  More recently, the first torsion-free examples with the property that any action on a finite dimensional contractible CW-complex has a global fixed point were constructed in  \cite[Cor.~C]{FFS}. 

It is worth emphasising that in \Cref{main}, neither the assumption of finite-dimensionality nor that of finite Betti numbers can be entirely omitted. For instance, any group acts freely on itself, regarded as a $0$-dimensional complex. Moreover, even for simply connected complexes, the omission of either assumption leads to free actions, since any discrete group acts freely on the universal cover of its presentation 2-complex and on the universal cover of its classifying space.

In contrast to \Cref{main},  the next result shows that one can construct torsion-free Tarski monsters of arbitrarily large but finite cohomological dimension, which therefore admit proper actions on finite dimensional contractible CW-complexes. Yet,  any action of such a group on a complex of sufficiently smaller dimension with finite Betti numbers must still have a global fixed point. This phenomenon mirrors the behaviour observed in earlier examples of groups with fixed-point properties that depend on the dimension of the space; see, for instance, \cite[Thm.~1.7]{arz09}, \cite{serre73, serre77}, \cite{farb09}, \cite{BV11}, \cite{bridson12}, and \cite{varghese14}. 

\begin{thm}{\label{prop}} For each integer $n\geq 4$, there is a finitely generated torsion-free Tarski monster $G_n$ with $\cd (G_n)=2n$ such that for any $m\leq n-4$,  any action of $G_n$ on any $m$-dimensional CW-complex  with finite Betti numbers has a global fixed point. 
\end{thm}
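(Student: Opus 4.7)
The plan is to adapt the construction used for \Cref{main} by starting from a seed group of cohomological dimension $2n$ and carefully controlling each amalgamation, so that the resulting simple group has finite cohomological dimension equal to $2n$. The dimension restriction $m\leq n-4$ will then arise as the natural trade-off between keeping $\cd$ finite and preserving enough witnesses for the fixed-point property.

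First, I would take a torsion-free seed group $H$ of type $F$ with $\cd(H)=2n$ --- for instance a direct product of $n$ closed orientable surface groups of genus $\geq 2$, or more generally any duality group of cohomological dimension $2n$ carrying enough structure for the amalgamation steps. Running the relative torsion-free Tarski monster construction of \cite{FFS} starting from $H$, we produce a tower of torsion-free groups connected by amalgamated products and HNN extensions along finitely generated subgroups. The key modification is to \emph{only amalgamate along subgroups of cohomological dimension at most $2n-1$}; then by Bieri's inequality $\cd(A\ast_C B)\leq\max\{\cd(A),\cd(B),\cd(C)+1\}$ and its HNN analogue, every group in the tower --- and hence the colimit $G_n$ --- has $\cd\leq 2n$. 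The embedded seed $H\hookrightarrow G_n$ provides the matching lower bound, so $\cd(G_n)=2n$. Torsion-freeness, simplicity, and the structural control on proper subgroups are inherited from the \cite{FFS} techniques.

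For the fixed-point property, suppose $G_n$ acts on an $m$-dimensional CW-complex $X$ with finite Betti numbers and no global fixed point, where $m\leq n-4$. Since $G_n$ is simple, every point-stabilizer is a proper subgroup and hence, by the FFS construction, is contained in a conjugate of one of the ``small'' subgroups produced at a finite stage of the tower. I would then run the Borel/equivariant cohomology spectral sequence $E_2^{p,q}=H^p(G_n;H^q(X))\Rightarrow H^{p+q}_{G_n}(X)$ along the lines of the proof of \Cref{main}: the hypothesis of finite Betti numbers bounds the rank of each $H^q(X)$, the dimension bound $q\leq m\leq n-4$ restricts the $E_2$ page to a narrow horizontal strip, and the structural control on proper subgroups bounds the contributions from stabilizers. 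These constraints together force the obstruction class in $H^{2n}(G_n;-)$ witnessing $\cd(G_n)=2n$ to vanish, a contradiction.

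The main obstacle will be calibrating the construction so that the two competing requirements --- maintaining $\cd(G_n)=2n$ on the one hand, and embedding enough witnessing subgroups for the fixed-point property in dimensions up to $n-4$ on the other --- hold simultaneously. The gap of $4$ between the cohomological dimension $2n$ and the allowed CW-dimension $m$ presumably reflects a combination of: the extra cohomological overhead in the spectral sequence when one replaces ``contractible'' by ``finite Betti numbers,'' the $+1$ cost in Bieri's inequality at each amalgamation stage, and the dimension shift between witnessing subgroups and the complexes on which they force fixed points.
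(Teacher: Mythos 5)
Your overall strategy (an FFS-type simple group with controlled proper subgroups, plus a spectral-sequence comparison against the action) points in the right general direction, but both key steps of your proposal have genuine gaps. On the construction side: Theorem 4.1 of \cite{FFS} takes as input torsion-free \emph{non-elementary hyperbolic} groups, so a direct product of $n$ surface groups cannot serve as a seed, and no ad hoc modification of the construction ``amalgamating only along subgroups of $\cd\le 2n-1$'' is available or needed. The paper instead feeds into Theorem \ref{monster} torsion-free uniform arithmetic lattices $L_{n,k}<SO(2n,1)$, which are hyperbolic, have $\cd=2n$, and --- crucially --- have $L^2$-Betti numbers concentrated in the middle degree $n$ with $\lb_n(L_{n,k})>k$; the finiteness $\cd(M)=2n$ is then inherited through the FFS construction (whose homological control $H_i(M;A)\cong\bigoplus_k H_i(L_k;A)$ holds for $i\ge 3$), not through Bieri-type inequalities. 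Moreover, to know that the homological size of the lattices survives in the quotient $M$ one needs a L\"uck-approximation input: the lattices are virtually locally indicable, and \Cref{cor:betti_approx} supplies finite sets $B_k$ on which the quotient maps $L_k\twoheadrightarrow M$ are injective, giving $\lb_n(M)=\infty$ while $\lb_j(M)\le 1$ for $j\ge 3$, $j\ne n$. Your proposal has no analogue of this step and in fact never invokes $L^2$-invariants, so it has no quantitative handle on $G_n$ beyond its cohomological dimension.

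On the fixed-point side, the contradiction you aim for --- forcing ``the obstruction class in $H^{2n}(G_n;-)$'' to vanish --- is not how the argument can run, and your heuristic for the bound $m\le n-4$ (Bieri's $+1$, etc.) is not the actual mechanism. The paper's contradiction lives in the middle degree $n$, not $2n$, and is measured by von Neumann dimension. Simplicity plus Mal'cev gives a trivial action on $H_*(X;\Q)$; one spectral sequence of the double complex $P_*\otimes_{\Z M}(C_*(X;\Q)\otimes_{\Q}\cN(M))$, using that every stabiliser is a proper, hence \emph{cyclic}, subgroup with vanishing $L^2$-Betti numbers in positive degrees, shows $\dim_{\cN(M)}H_i(\mathrm{Tot}_*)=0$ for $i>m$; the other has $E^2_{p,q}=H_p(M;\cN(M))^{b_q}$ with $\dim_{\cN(M)}E^2_{n,0}=\infty$, and the differentials $d^i_{n,0}$ land in $E^i_{n-i,i-1}$, which vanishes for $i>m+1$ and has dimension at most $b_{i-1}$ for $2\le i\le m+1$ \emph{provided} $n-i\ge 3$, because the FFS isomorphism controls homology only in degrees $\ge 3$. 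The requirement $n-(m+1)\ge 3$, i.e.\ $m\le n-4$, is exactly where the hypothesis enters. Nothing in your sketch produces this bound, and an argument phrased purely in terms of ordinary cohomology in degree $2n$ cannot reach the contradiction, since what must survive to $E^\infty$ is an infinite von Neumann dimension in column $n$, played off against the $\le m$ bound from the other spectral sequence.
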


The main idea of the proofs of \Cref{main} and \Cref{prop} is to pit  the two spectral sequences associated with an action of the group~$G$ on a CW-complex against each other, by comparing their von Neumann dimensions. Since $G$ arises as a quotient of certain even-dimensional hyperbolic arithmetic lattices, its $L^2$-Betti numbers can be approximated by those of the lattices, whose spectra have large gaps which we exploit.

\subsection*{Acknowledgments.} 
We are grateful to Francesco Fournier-Facio, Ian Leary, and Ashot Minasyan for reading an earlier version of this note and for their helpful suggestions. We also thank the referee for a careful reading and for many useful comments.

\section{Background}

\subsection{Homological finiteness properties}
A {\it $G$-CW-complex} is a CW-complex $X$ equipped with a $G$-action that is compatible with the cell structure, in the sense of L\"uck \cite[Def.~1.25]{luck2002l2}.  This notion is equivalent to that of an admissible $G$-complex in the terminology of Brown \cite[Chap.~IX, §10]{brown1982cohomology}, characterised by the property that any element of $G$ which stabilises a cell $\sigma \subseteq X$ must fix $\sigma$ pointwise.  Throughout, we will assume that all $G$-actions are of this form.

We say that $X$ is a \textit{free $G$-CW complex} if the action $G$ on $X$ is free. 
By $EG$ we mean a contractible free $G$-CW complex, and by $BG$ we mean the quotient $EG/G$ called a {\it classifying space} of $G$. If we do not specify the choice of $EG$, then we simply choose an arbitrary contractible free $G$-CW complex as $EG$. Such a complex always exists and any two choices for $EG$ are $G$-homotopy equivalent \cite{luck05_survey}.  A group $G$ is said to be of {\it type} $F_n$ if there is a $BG$ with finite $n$-skeleton and it is of {\it type} $F$ if $BG$ is finite.

The \textit{cohomological dimension of $G$} can be defined by
$$\cd (G)=\sup\{n\in\mathbb{N}\mid  H^{n}(G; A)\neq 0\text{ for some } \Z G\text{-module }A\}.$$

\subsection{$L^2$-Betti numbers} For details, we refer to \cite{luck2002l2, luck2003l2}. Let $G$ be a discrete group. The \textit{group von Neumann algebra} of $G$, denoted $\cN(G)$, is the set of all bounded  linear operators on $\ell^2(G)$ that commute with the left regular representation of $G$.  The \textit{von Neumann dimension $\dim_{\cN(G)}$} of $G$ is a function from the set of $\cN(G)$-modules to $[0,\infty]$ as defined in \cite[Chapter 6]{luck2002l2}, satisfying the following properties.

\begin{prop}\label{prop. properties of dimension} Let $G$ be a group. The following properties hold.
\begin{enumerate}[(i)]
    \item\label{item. finite dimension} If $A$ is a finitely generated $\cN(G)$-module, then $\dim_{\cN(G)} A<\infty$.

    \item\label{item. only depend on iso class} If $A$ and $B$ are isomorphic $\cN(G)$-modules, then $\dim_{\cN(G)} A=\dim_{\cN(G)} B$.

    \item\label{item. additivity} If $0\rightarrow A\rightarrow B \rightarrow C\rightarrow 0$ is an exact sequence of $\cN(G)$-modules, then 
    $$\dim_{\cN(G)} B=\dim_{\cN(G)} A+\dim_{\cN(G)} C,$$
    where the laws of the summation of $[0,\infty)$ extends naturally to $[0,\infty]$.

    \item\label{item. cofinal} Let $\{A_i\mid i\in I\}$ be a cofinal system of $\cN(G)$-submodules of an $\cN(G)$-module $A$, i.e., $A=\bigcup_{i\in I}A_i$ and for every two indices $i,j\in I$ there exists an index $k\in I$ such that $A_i, A_j\subset A_k$. Then $\dim_{\cN(G)} A=\sup\{\dim_{\cN(G)} A_i\mid i\in I\}$. 
\end{enumerate}
    
\end{prop}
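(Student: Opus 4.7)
The proof is standard, and I would follow the three-stage construction in \cite[Chapter 6]{luck2002l2}. First, I would define $\dim_{\cN(G)}$ on finitely generated projective $\cN(G)$-modules $P$ by writing $P$ as the image of a self-adjoint idempotent $A\in M_n(\cN(G))$ and setting $\dim_{\cN(G)} P := \sum_{i=1}^n \mathrm{tr}_{\cN(G)}(A_{ii})$, where $\mathrm{tr}_{\cN(G)}(a) = \langle a\delta_e,\delta_e\rangle_{\ell^2(G)}$ is the canonical finite normal trace on $\cN(G)$. Independence of the choice of $A$ is classical. On this subclass, properties (i)--(iii) are essentially immediate: finiteness holds because the trace of a projection in a finite von Neumann algebra is finite, isomorphism invariance follows because the trace is a similarity invariant, and additivity follows because every short exact sequence of finitely generated projectives splits.

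Next, I would extend $\dim_{\cN(G)}$ to arbitrary finitely generated $\cN(G)$-modules. Presenting $M = \cN(G)^n/K$, the key input is that $\cN(G)$ is semihereditary (a consequence of being a finite von Neumann algebra with a faithful normal trace): one can replace $K$ by its ``closure'' $\overline{K}$, so that $\cN(G)^n/\overline{K}$ is finitely generated projective and $\overline{K}/K$ has no nontrivial finitely generated projective submodule. I would then set $\dim_{\cN(G)} M := \dim_{\cN(G)}\bigl(\cN(G)^n/\overline{K}\bigr)$. Well-definedness together with (i) and (ii) follows without difficulty; additivity (iii) for short exact sequences of finitely generated modules is the main technical content of this step and requires careful bookkeeping of the closure operation along the connecting maps.

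Finally, for an arbitrary $\cN(G)$-module $M$, I would set
\[
\dim_{\cN(G)} M := \sup\bigl\{\dim_{\cN(G)} N \mid N \leq M \text{ finitely generated}\bigr\}.
\]
Then (ii) is automatic, and (iv) follows directly from this definition: by cofinality, every finitely generated submodule of $\bigcup_i A_i$ is contained in some $A_k$, so the two suprema coincide. Additivity (iii) for arbitrary modules is obtained by restricting a given short exact sequence to finitely generated submodules of the middle term and passing to the supremum, combining the finitely generated case with the continuity provided by the supremum definition.

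The main obstacle throughout is additivity at the level of finitely generated modules. This is where one really uses the structure theory of $\cN(G)$---specifically, that its zero-dimensional submodules are invisible under the dimension function in a strong enough sense to be compatible with short exact sequences---and it is here that the bulk of \cite[Chapter 6]{luck2002l2} is devoted.
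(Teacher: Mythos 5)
Your outline is correct and follows exactly the route the paper itself relies on: the paper gives no argument of its own but simply cites L\"uck's Chapter 6, and your three-stage construction (trace on finitely generated projectives, extension to finitely generated modules via the closure of the relation module using semihereditariness, then the supremum over finitely generated submodules, from which cofinality is immediate) is a faithful reconstruction of that reference. The only cosmetic difference is that L\"uck takes the supremum over finitely generated \emph{projective} submodules, which coincides with your supremum once the finitely generated case is in place, so nothing is lost.
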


Suppose $\theta : K\to G$ is a homomorphism of groups. It induces a ring homomorphism from $\Z K$ to $\Z G$. Since $\Z G$  naturally embeds in $\cN(G)$, the ring $\cN(G)$ becomes a left $\Z K$-module.   So, we can form the  chain complex $C_\ast(EK)\otimes_{\Z K}\cN(G)$ of right $\cN(G)$-modules and denote its homology groups by $H_\ast(K ; \cN(G))$ and by 
$$\lb_\ast (K\xrightarrow{\theta} G):= \dim_{\cN(G)}  H_\ast(K ; \cN(G)), \; \; \lb_\ast (G):= \lb_\ast (G\xrightarrow{id} G).$$

We will only be interested in the special cases where $\theta$ is either a quotient homomorphism or the inclusion homomorphism of a subgroup.

 Jaikin-Zapirain and L\'{o}pez-\'{A}lvarez in \cite{jaikin2020strong}  proved L\"uck's Approximation Conjecture for virtually locally indicable groups.  We will need  a quantitative application  of this result from \cite{PS25}, which is  similar to \cite[Thm.~2.4]{FFS}.

\begin{cor}[{\cite[Cor.~3.4]{PS25}}]\label{cor:betti_approx}
    Let $G$ be a finitely generated virtually locally indicable group of type $F_{p+1}$ for some $p\in\mathbb N$. Then for every $\delta>0$, there exists a finite subset $\mathcal F_{p,\delta}\subset G\smallsetminus\{1\}$ such that if a normal subgroup $N\lhd G$ satisfies $N\cap \mathcal F_{p,\delta}=\emptyset$, then for all $n\leq p$,
    \[|b^{(2)}_n(G)-b^{(2)}_n(G\twoheadrightarrow G/N)|<\delta.\]
\end{cor}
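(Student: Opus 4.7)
The plan is to upgrade the qualitative L\"uck Approximation Theorem for virtually locally indicable groups of Jaikin-Zapirain and L\'opez-\'Alvarez \cite{jaikin2020strong} to this uniform statement by a compactness/contradiction argument in the Chabauty topology on the subgroup space of $G$.

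I would argue by contradiction. If the corollary failed, there would exist a fixed $\delta>0$ and, using an enumeration of the countable set $G\smallsetminus\{1\}$ by finite sets $\mathcal F_1\subset\mathcal F_2\subset\cdots$ with $\bigcup_k \mathcal F_k = G\smallsetminus\{1\}$, a sequence of normal subgroups $N_k\lhd G$ satisfying $N_k\cap\mathcal F_k=\emptyset$ yet $|b^{(2)}_{n_k}(G)-b^{(2)}_{n_k}(G\twoheadrightarrow G/N_k)|\geq\delta$ for some $n_k\leq p$. By pigeonhole I may assume that $n_k = n_0$ is constant. Sequential compactness of the Chabauty space on the countable group $G$ lets me further extract a subsequence with $N_k\to N_\infty$ in the Chabauty topology; the construction forces $N_\infty=\{1\}$, since each $g\in G\smallsetminus\{1\}$ eventually belongs to some $\mathcal F_k$ and is therefore outside every later $N_k$.

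Next, I would exploit the type $F_{p+1}$ hypothesis: the numbers $b^{(2)}_n(G\twoheadrightarrow G/N)$ for $n\leq p$ can be computed from the chain complex of the universal cover of the $(p+1)$-skeleton of $BG$, which is a complex of finitely generated free $\Z G$-modules, after applying $-\otimes_{\Z G}\cN(G/N)$. The Betti numbers are thus expressed as von Neumann dimensions of kernels of finitely many matrices with entries in $\Z(G/N_k)$. Applying the approximation theorem of \cite{jaikin2020strong} to the Chabauty-convergent sequence $N_k\to\{1\}$ gives $b^{(2)}_{n_0}(G\twoheadrightarrow G/N_k)\to b^{(2)}_{n_0}(G)$, contradicting the uniform gap $\delta$ and completing the proof.

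The main obstacle is ensuring that the qualitative approximation theorem applies in the generality of Chabauty-convergent rather than descending sequences of normal subgroups. If \cite{jaikin2020strong} only provides convergence along nested chains, one reduces to that case either by diagonalisation, replacing $N_k$ with $\bigcap_{j\geq k}N_j$ and checking that Chabauty-convergence to $\{1\}$ is preserved, or by invoking the strong Atiyah conjecture from the same paper, whose rigidity of dimensions promotes the chain-wise statement to continuity with respect to convergence of matrix coefficients, and hence to Chabauty-continuity of the relevant $L^2$-Betti numbers at the trivial subgroup.
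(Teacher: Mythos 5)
The paper does not prove this statement itself; it imports it from \cite{PS25} (where it is deduced from \cite{jaikin2020strong} exactly along the lines you describe), so your proposal follows essentially the intended route: assume the uniform statement fails, extract a sequence of normal subgroups $N_k$ avoiding an exhausting family of finite subsets of $G\smallsetminus\{1\}$ but with a Betti-number gap $\geq\delta$ in some fixed degree $n_0\leq p$, observe that then $G/N_k\to G$ in the space of marked groups (equivalently $N_k\to\{1\}$ in Chabauty --- note no subsequence extraction is even needed, the whole sequence converges), use type $F_{p+1}$ to express $b^{(2)}_n(G\twoheadrightarrow G/N_k)$ for $n\leq p$ through finitely many matrices over $\Z G$ pushed to $\Z(G/N_k)$, and invoke the approximation theorem to get $b^{(2)}_{n_0}(G\twoheadrightarrow G/N_k)\to b^{(2)}_{n_0}(G)$, a contradiction. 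Your worry in the final paragraph is unfounded but its proposed remedies are not sound, so it is fortunate they are moot: the result of \cite{jaikin2020strong} is precisely L\"uck's approximation conjecture \emph{in the space of marked groups} for (virtually) locally indicable limit groups, with no hypothesis (such as soficity) on the approximating quotients --- an essential point here, since in the application the quotients are Tarski monsters. By contrast, replacing $N_k$ by $\bigcap_{j\geq k}N_j$ would change the quotients whose $L^2$-Betti numbers you must control (there is no comparison between $b^{(2)}_*(G\twoheadrightarrow G/N_k)$ and $b^{(2)}_*(G\twoheadrightarrow G/\bigcap_{j\geq k}N_j)$), so it does not reduce the marked-group statement to a chain statement; and appealing to the strong Atiyah conjecture to ``promote'' chain-wise convergence to Chabauty-continuity is not an argument --- semicontinuity gives only one inequality, and the quotients need not satisfy Atiyah. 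With the correct citation of the marked-groups form of the theorem, your main line of argument is complete and matches the source's proof.
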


\section{Proof of  Main Results}

A key input for the proofs is Theorem 4.1 of \cite{FFS}, of which we state only the parts necessary for our arguments.

\begin{thm}[{Fournier-Facio--Sun, \cite[Thm.~4.1]{FFS}}]\label{monster} For each positive integer $k$, let $L_k$ be a torsion-free non-elementary hyperbolic group, and $B_k \subset L_k$ be finite subsets. Then there exists a group $M$ such that 
\begin{enumerate}[(i)]
\item $M$ is finitely generated torsion-free simple group.

\item Each proper subgroup of $M$ is cyclic.

\item There is an epimorphism $\pi_k: L_k\twoheadrightarrow M$ that is injective on $B_k$.

\item \label{item:homol_iso} For every $\Z M$-module (\,$\Z M$--$\cN(M)$-bimodule) $A$, and $i\geq 3$, there is an isomorphism (of right $\cN(M)$-modules)
$$H_i(M; A)\cong \bigoplus_{k\geq 1} H_i(L_k ; A),$$
induced by the epimorphism $\Asterisk_{k\geq 1} L_k\twoheadrightarrow M$ whose restriction to each subgroup $L_k$ is $\pi_k$.
\end{enumerate} 
\end{thm}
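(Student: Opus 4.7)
The plan is to construct $M$ as a direct limit of (relatively) hyperbolic quotients built iteratively from the $L_k$ via graded small cancellation and Dehn filling. The starting observation is that for any free product, Mayer--Vietoris gives $H_i(\ast_k L_k; A) \cong \bigoplus_k H_i(L_k; A)$ for all $i \geq 1$ and any coefficient module $A$. So clause (iv) will follow if I can arrange that the canonical map $P := \ast_k L_k \twoheadrightarrow M$ (or its analogue through a telescope of finite subproducts) is an isomorphism on $H_i(-;A)$ for $i \geq 3$, while simultaneously forcing $M$ to be simple with all proper subgroups cyclic and preserving injectivity on each finite set $B_k$.

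First I would enumerate in a single countable list the requirements to be imposed: pairs $(x,y)$ of non-trivial elements that must become conjugate (to force simplicity) and finitely generated subgroups that must collapse to cyclic ones (for the Tarski property). At stage $n$, given a torsion-free (relatively) hyperbolic quotient $P_{n-1}$ in which the images of the $B_k$ are still injective, pick a very long, primitive, self-centralising element $w_n \in P_{n-1}$ whose normal closure realises the $n$-th condition, and set $P_n := P_{n-1}/\langle\langle w_n \rangle\rangle$. Osin's group-theoretic Dehn filling (or Olshanskii's graded small cancellation) guarantees, for $w_n$ sufficiently long, that $P_n$ is again torsion-free (relatively) hyperbolic, that each $B_k$ remains injective, and---via the Cohen--Lyndon property---that $N_n := \langle\langle w_n \rangle\rangle$ is a free group whose abelianisation $N_n^{\mathrm{ab}}$ is a free $\Z P_n$-module. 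The Lyndon--Hochschild--Serre spectral sequence of $1 \to N_n \to P_{n-1} \to P_n \to 1$ then collapses to give $H_i(P_{n-1};A) \cong H_i(P_n;A)$ for all $i \geq 3$ and every $\Z P_n$- or $\cN(P_n)$-module $A$. Setting $M := \varinjlim_n P_n$ and using that group homology commutes with directed colimits yields~(iv).

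The main obstacle is the coordination of the enumeration: each $w_n$ must be simultaneously long enough for the small-cancellation / Dehn-filling theorems, disjoint from the prescribed finite sets $B_k$ (and from any elements required to survive up to stage $n$), primitive and self-centralising so as to trigger Cohen--Lyndon, and effective at killing the next obstruction. Carrying out this bookkeeping in a single direct limit that is finitely generated, torsion-free, and never collapses to the trivial group is the technical heart of the construction.
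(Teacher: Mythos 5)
This statement is not proved in the paper at all: it is quoted (in abridged form) from Fournier-Facio--Sun \cite{FFS}, and the paper uses it purely as a black box in the proofs of the two main theorems. So the only meaningful comparison is with the proof in \cite{FFS} itself, and your outline does reconstruct its general strategy: realise $M$ as a direct limit of torsion-free (relatively) hyperbolic quotients obtained by iterated small-cancellation/Dehn-filling quotients involving the $L_k$, use the Cohen--Lyndon property of such fillings to see that each kernel $N_n$ is free with $N_n^{\mathrm{ab}}$ an induced (in the best case free) module over the quotient, feed this into the Lyndon--Hochschild--Serre spectral sequence to get $H_i(P_{n-1};A)\cong H_i(P_n;A)$ for $i\geq 3$, and pass to the colimit, with Mayer--Vietoris for the (telescoped) free products producing the direct sum in (iv). That is indeed the mechanism behind the ``strong homological control'' in \cite{FFS}.

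As a proof, however, the sketch has concrete gaps beyond the bookkeeping you explicitly defer. Simplicity is not obtained by making pairs of non-trivial elements conjugate: one needs every non-trivial element $x$ to normally generate $M$, so the relations imposed must place a fixed finite generating set inside $\langle\langle x\rangle\rangle$ (killing elements of the form $g\cdot W(x^{h_1},\dots,x^{h_r})^{-1}$), which is a multi-relator filling, not a single conjugation relation. Likewise one cannot ``collapse a finitely generated subgroup to a cyclic one'' while keeping it proper; the Tarski-type property is obtained by forcing every non-elementary two-generated subgroup to become all of $M$ in the limit, and one must then still argue that proper subgroups of the limit are genuinely cyclic rather than merely locally cyclic. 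Finite generation also needs explicit treatment: $\ast_k L_k$ is infinitely generated, so the construction must interleave relations identifying the generators of each new $L_k$ with words over a fixed finite generating set, and it is exactly for such families of relators that the Cohen--Lyndon theorem for hyperbolically embedded subgroups is invoked; note also that $N_n^{\mathrm{ab}}$ is a free $\Z P_n$-module only when the maximal elementary subgroup $E(w_n)$ equals $\langle w_n\rangle$ (in general it is induced from a cyclic subgroup, which still suffices for $i\geq 3$ by Shapiro's lemma), and the free-product isomorphism holds for $i\geq 2$, not $i\geq 1$. None of these points derails the approach --- it is the route the cited authors take --- but they are precisely the technical content of \cite[Thm.~4.1]{FFS}, so your proposal should be read as a correct high-level outline of that proof rather than a proof.
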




\begin{lem}\label{lemma_2} For each  integers $n\geq 1$ and $k\geq 1$, there is a torsion-free virtually locally indicable uniform arithmetic lattice $L_{n,k}$ in $SO(2n, 1)$ of type $F$, such that
\begin{enumerate}[(i)]
\item $\cd(L_{n,k})=2n,$

\item $\lb_{n} (L_{n,k})> k,$

\item $\lb_j (L_{n,k})=0, \; \forall j\ne n$.
\end{enumerate} 
\end{lem}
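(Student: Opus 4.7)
The plan is to build $L_{n,k}$ as a suitable finite-index subgroup of a standard-type arithmetic uniform lattice in $SO(2n,1)$. The cohomological dimension and the vanishing of $\lb_j$ off the middle degree come essentially for free from the geometry of the associated closed hyperbolic $2n$-manifold, while the prescribed lower bound on $\lb_n$ will be arranged by passing far enough into a chain of finite covers.

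Concretely, I would start with a totally real number field $K\ne\Q$ with ring of integers $\mathcal{O}$ and a nondegenerate quadratic form $q$ in $2n+1$ variables over $K$ whose signature is $(2n,1)$ at one archimedean place and positive definite at all others. Then $SO(q,\mathcal{O})$ projects to a uniform lattice in $SO(2n,1)$, and by Selberg's lemma I may replace it by a torsion-free finite-index subgroup $\Gamma$. The quotient $M:=\Gamma\backslash \mathbb{H}^{2n}$ is a closed aspherical $2n$-manifold, which immediately yields $\cd(\Gamma)=2n$.

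For the $L^2$-Betti numbers I would appeal to Dodziuk's calculation that the space of $L^2$-harmonic $j$-forms on $\mathbb{H}^{2n}$ vanishes for $j\ne n$; together with Atiyah's formula this gives $\lb_j(\Gamma)=0$ for $j\ne n$ and hence $\lb_n(\Gamma)=(-1)^n\chi(M)$ via the $L^2$-Euler characteristic. Gauss--Bonnet makes $|\chi(M)|$ a positive multiple of $\mathrm{vol}(M)$, so $\lb_n(\Gamma)>0$. To strengthen the lower bound to $\lb_n(\Gamma')>k$, I would use residual finiteness of $\Gamma$ to find, for any $d$, a finite-index subgroup $\Gamma'\leq\Gamma$ of index $d$; multiplicativity of $L^2$-Betti numbers under finite covers gives $\lb_n(\Gamma')=d\cdot \lb_n(\Gamma)$, so choosing $d$ large enough does the job. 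Properties (i) and (iii) are inherited by $\Gamma'$ automatically.

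The main obstacle is verifying virtual local indicability. Here I would invoke the theorem of Bergeron--Haglund--Wise that standard-type arithmetic hyperbolic lattices in $SO(n,1)$ are virtually compact special, combined with the Haglund--Wise embedding of compact special groups into right-angled Artin groups. Since RAAGs are residually torsion-free nilpotent, they are bi-orderable and in particular locally indicable, so a finite-index subgroup of $\Gamma'$ is locally indicable. Setting $L_{n,k}:=\Gamma'$ for a sufficiently deep $\Gamma'$ completes the argument. Apart from the cube-complex input, everything reduces to standard facts about $L^2$-Betti numbers of closed hyperbolic manifolds.
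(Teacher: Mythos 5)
Your proposal is correct and follows essentially the same route as the paper: an arithmetic lattice of simplest type in $SO(2n,1)$, the vanishing of $L^2$-Betti numbers away from the middle degree with $\lb_n=|\chi|>0$, virtual specialness via Bergeron--Haglund--Wise to get virtual local indicability, and passage to a finite-index subgroup together with multiplicativity of $L^2$-Betti numbers to force $\lb_n>k$. The only slight imprecision is the claim that residual finiteness yields a subgroup of every index $d$; it only gives subgroups of arbitrarily large finite index, which is all that is needed here.
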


\begin{proof} For each $n\geq 1$, let $L_n$ be a torsion-free uniform arithmetic lattice of simplest type in $SO(2n, 1)$. Since $\mathbb H^{2n}/{L_n}$ is a closed aspherical manifold, it follows that $L_n$ is of type $F$ and $\cd (L_n) =2n$. By Theorem 2.3 of \cite{jost2000vanishing}, $\lb_j(L_n)=0$ if $j\ne n$  and $\lb_n(L_n)=|\chi(L_n)|$ which is nonzero (see e.g.~\cite[Theorem 7.5]{PS25}). By Theorem 1.8 of \cite{bergeron2012boundary}, each $L_n$ is virtually compact special and therefore virtually locally indicable (see e.g.\,\cite[Prop.~4.12(iii)]{PS25}).

Now, for each $k\geq 1$, let $L_{n,k}$  be a finite index subgroup of $L_n$ such that $\lb_n(L_{n,k})=|\chi(L_{n,k})|>k$. By  \cite[Thm.~2.3]{jost2000vanishing}, $\lb_j(L_{n,k})=0$ if $j\ne n$.
\end{proof}

\begin{proof}[Proof of \Cref{main}] Given an infinite sequence $S=\{n_l \; | \; n_{l+1}> n_l +l +1 \}\subset \mathbb N_{\geq 3}$ and $k\geq 1$, let $L_{n_l,k}$ be hyperbolic lattice groups as in Lemma \ref{lemma_2}. For each pair $(n_l, k)$, let $B'_{l,k}\subset L_{n_l,k}\smallsetminus \{1\}$ be the finite set given by \Cref{cor:betti_approx} with respect to $p=2n_l$ and $\delta=2^{-n_l\cdot k}$.  We apply Theorem \ref{monster} with  $\{L_{n_l,k}\}$ for all $l \in \mathbb{N}$ and all $k \geq 1$, and wth the corresponding sets $B_{l,k}:=B'_{l,k}\cup \{1\}$, to obtain a finitely generated torsion-free simple group $M_S$ such that $\cd (M_S)=\infty$. 


By \Cref{monster}(\ref{item:homol_iso}), Lemma \ref{lemma_2} and \Cref{cor:betti_approx},
\begin{align*}
 \forall j\in S, \; \lb_{j}(M_S)&=\sum_{k\geq 1, n_l \in S} \lb_{j}(L_{{n_l}, k}\twoheadrightarrow M_S)\\
&\geq \sum_{k\geq 1} \lb_{j}(L_{{j}, k}\twoheadrightarrow  M_S)\\
&\geq \sum_{k\geq 1} (\lb_{j}(L_{{j}, k})-2^{-j\cdot k})=\infty;\\
 \forall j\notin S, j\geq 3, \; \lb_j(M_S)&=\sum_{k\geq 1, n_l \in S} \lb_{j}(L_{{n_l}, k}\twoheadrightarrow  M_S)\\
&\leq \sum_{k\geq 1, n_l \in S}(\lb_{j}(L_{{n_l},k})+2^{-n_l\cdot k})\\
&\leq \sum_{k\geq 1, n_l \in S} 2^{-n_l\cdot k}\leq 1.
\end{align*}
This shows that if $S\ne S'$, then $\{\lb_*(M_S)\}\ne \{\lb_*(M_{S'})\}$ and hence, $M_S\ncong M_{S'}$. Since the collection of the above subsets $S$ is uncountable, it  follows that, up to isomorphism, the family of the corresponding groups $M_S$ is uncountable.

Suppose that for some $S$, the group $M_S$ acts on an $m$-dimensional CW-complex $X$ such that $H_i(X; \Q)$ has finite dimension $b_i$ over $\Q$ for each $i$. 
The action of $M_S$ on the complex $X$ induces an action on $H_i(X; \Q)$ which may be seen as a representation $\rho_i:M_S\to \mbox{Aut}(H_i(X;\Q))=\mathrm{GL}(b_i, \Q)$. By a theorem of Mal'cev \cite{Malcev40}, any finitely generated subgroup of $\mathrm{GL}(b_i, \Q)$ is residually finite. Since $M_S$ is simple, it follows that $\rho_i$ is trivial for all $i$, i.e.~$M_S$ acts trivially on the rational homology of $X$.

Suppose, by a way of contradiction, that the action of $M_S$ on $X$ does not have a  fixed point.  

Next, we recall two standard spectral sequences (see e.g.\, \cite[\S VII.7]{brown1982cohomology}) associated to an action of a group on a CW-complex, adapted to our setting. We take coefficients in the ring $\cN(M_S)$, which is a $\Z M_S$--$\cN(M_S)$-bimodule. This ensures that the spectral sequences naturally have the structure of right $\cN(M_S)$-modules.

Consider the double complex of right $\cN(M_S)$-modules 
$$P_* \otimes_{\Z M_S} \displaystyle{(C_*(X;\Q)\otimes_{\Q} \cN(M_S))},$$
where $P_*$ is a projective resolution of $\Z$ over $\Z M_S$ and $C_*(X; \Q)$ is the cellular chain
complex of $X$ with rational coefficients. Let $\mbox{Tot}_*$ denote the total complex of this double complex. The double complex gives the $E^1$-term spectral sequence (see \cite[\S VII.5 (5.3) p.\,169]{brown1982cohomology}),
$$E_{p,q}^1=H_q(M_S; C_p(X;\Q) \otimes_{\Q} \cN(M_S)) \Longrightarrow
H_{p+q}(\mbox{Tot}_*),$$
Since 
$$C_p(X;\Q) \otimes_{\Q} \cN(M_S)\cong \displaystyle{\bigoplus_{\sigma \in \Sigma_p}} \Q M_S \otimes_{\Q M_{\sigma}}  \cN(M_S),$$
as right $\cN(M_S)$-modules, where $X_p$ is the collection of all the $p$-cells and $\Sigma_p$ denotes
a set of representatives of all the $M_S$-orbits in $X_p$, and $M_{\sigma}$ is the stabiliser of $\sigma$ , 
the spectral sequence transforms to
$$E_{p,q}^1=\displaystyle{\bigoplus_{\sigma \in \Sigma_p}}H_q(M_{\sigma}; \cN(M_S))\Longrightarrow
H_{p+q}(\mbox{Tot}_*),$$
\noindent (see \cite[\S VII.7 (7.7) p.\,173]{brown1982cohomology}). According to our hypotheses, each stabiliser is a proper subgroup of $M_S$ and hence is cyclic. So, by Theorem 6.37 and Theorem 6.54 (7) of \cite{luck2002l2},  $\lb_q(M_{\sigma}\hookrightarrow M_S)=0$ for all $q>0$. Therefore, $\dim_{\cN(M_S)} E_{p,q}^1 = 0$
when $p>m$ or $q>0$. This implies that   $\mbox{dim}(H_{*}(\mbox{Tot}_*))\leq m$,
where $$\mbox{dim}(H_{*}(\mbox{Tot}_*)):=\max \{ i \; | \;  \dim_{\cN(M_S)}H_{i}(\mbox{Tot}_*)\ne 0\}.$$

The double complex also gives the $E^2$-term spectral sequence
$$E_{p,q}^2= H_p(M_S; H_q(X;\Q)\otimes_{\Q} \cN (M_S))
\Longrightarrow H_{p+q}(\mbox{Tot}_*),$$
(see \cite[\S VII.7 (7.2) p.\,172]{brown1982cohomology}).
Observe that $E_{p,q}^2 = 0$ when $q> m$. Denote by $ \cN (M_S)^{b_q}$ the direct sum of $b_q$ copies of $\cN (M_S)$, equipped with the diagonal $\Z M_S$--$\cN (M_S)$-bimodule structure.  Since $M_S$ acts trivially on $H_*(X; \Q)$, 
$$H_q(X;\Q)\otimes_{\Q} \cN (M_S)\cong \cN (M_S)^{b_q}$$
as $\Z M_S$--$\cN (M_S)$-bimodules. The
spectral sequences then becomes
$$E_{p,q}^2= H_p(M_S; \cN (M_S))^{b_q}
\Longrightarrow H_{p+q}(\mbox{Tot}_*).$$

\begin{figure}[ht]
    \centering
    \begin{tikzpicture}[scale=1.2,>=stealth]


    \draw[dashed] (0,1)  -- (7,1);
    \node[left] at (0,1) {$m$};
    
 \fill[lightgray!30] (0,0) rectangle (7,1);

    \draw (2,0) -- (2,1);
    \draw (5,0) -- (5,1);

    \node[below] at (2,0) {$n_{m}$};
    
    \node[below] at (5,0) {$n_{m+1}$};
    

     \draw[->] (0,0) -- (7,0) node[right] {$p$};
    \draw[->] (0,0) -- (0,2) node[above] {$q$};

    \draw[->] (5,0) -- (3, 0.6);

    \node at (2.7, 0.3) {$d^{i\leq m+1}$};
    
     \node at (2.7, 1.6) {$d^{i> m+1}$};

    \draw[->] (5,0) -- (1.8, 1.5);
    
 
    \fill (2,0) circle (1pt);
    \fill (5,0) circle (1pt);

    \end{tikzpicture}
    \caption{Shaded region $0\leq q\leq m$ illustrates all possible non-zero terms on the $E^2$-page and higher pages. The arrow representing the differential $d^i$ starting at $(n_{m+1}, 0)$ remains within this region precisely when $i\leq m+1$, in which case its terminal point does not reach the vertical line at $n_m$.}
    \label{fig:spectral_sequence_1}
\end{figure}
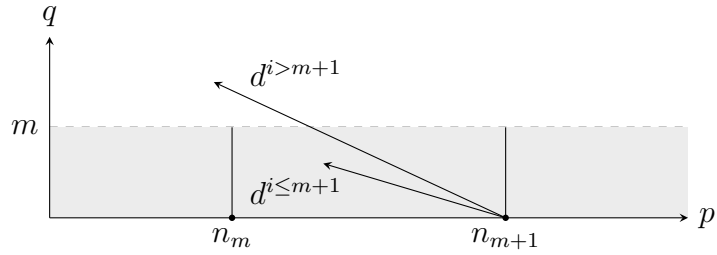

Let $n_{m}, n_{m+1}\in S$. Then, $\dim_{\cN(M_S)} E^2_{n_{m+1}, 0}=b_0\cdot \lb_{n_{m+1}}(M_S)=\infty$ and if $n_m< p<n_{m+1}$, then 
\begin{align*}
\dim_{\cN(M_S)} E^2_{p, q}&=\dim_{\cN(M_S)} H_p(M_S; \cN (M_S))^{b_q}, \\
&=b_q\cdot \lb_p(M_S) \leq  b_q.
\end{align*}
By \Cref{prop. properties of dimension}(\ref{item. additivity}), this  implies that $\dim_{\cN(M_S)} E^i_{p, q}$ is bounded for all $i\geq 2$ whenever $n_m< p<n_{m+1}$. Since $E_{p,q}^2 = 0$ for $q> m$ and by construction, $n_{m+1}-n_m> m+1$, the target of the differential $d^i_{n_{m+1}, 0}$ has a finite von Neumann dimension for all $i\geq 2$ (see \Cref{fig:spectral_sequence_1}). By, again invoking \Cref{prop. properties of dimension}(\ref{item. additivity}), for each $i\geq 2$, we obtain
$$\dim_{\cN(M_S)} E^{i+1}_{n_{m+1}, 0}= \dim_{\cN(M_S)}  \ker d^i_{{n_{m+1}},0}=\infty.$$
Since $E_{n_{m+1}, 0}^{\infty} =E_{n_{m+1}, 0}^{m+2}$, we obtain $\dim_{\cN(M_S)} E_{n_{m+1}, 0}^{\infty} =\infty$. Hence, $\mbox{dim}(H_{*}(\mbox{Tot}_*))\geq n_{m+1}$ which contradicts our previous estimate.
\end{proof}

\begin{proof}[Proof of Theorem \ref{prop}] The proof is similar to the proof of Theorem \ref{main}. So, we only mention the key differences and outline the rest.

Fix $n\geq 4$.  For each $k\geq 1$, let $L_{k}:=L_{n,k}$ be hyperbolic lattice groups as in Lemma \ref{lemma_2} and  let $B'_{k}\subset L_{k}\smallsetminus \{1\}$ be the finite set given by \Cref{cor:betti_approx} with respect to $p=2n$ and $\delta=2^{-k}$.  We apply Theorem \ref{monster} with  $\{L_{k}\}$ and $B_{k}:=B'_k\cup \{1\}$, to obtain a finitely generated torsion-free simple group $M$ such that $\cd (M)=2n$.

By Theorem \ref{monster}(\ref{item:homol_iso}), Lemma \ref{lemma_2} and \Cref{cor:betti_approx},
\begin{align*}
\lb_{n}(M)&=\sum_{k=1}^{\infty} \lb_{n}(L_k\twoheadrightarrow  M)\\
&\geq \sum_{k=1}^{\infty} (\lb_{n}(L_k)-2^{-k})=\infty, \\
\lb_j(M)&=\sum_{k=1}^{\infty} \lb_j(L_k\twoheadrightarrow  M),\\
&\leq \sum_{k=1}^{\infty} (\lb_j(L_k)+2^{-k})\leq 1, \; \forall j\ne n, j\geq 3.
\end{align*}

Suppose $M$ acts on an $m$-dimensional CW-complex $X$ such that $m\leq n-4$ and $H_i(X;\Q)$ has finite dimension $b_i$ over $\Q$ for all $i$.  By Mal'cev's Theorem, any finitely generated subgroup of $\mathrm{GL}(b_i, \Q)$ is residually finite. Thus, the action of $M$ on $X$ induces the trivial action on the rational homology of $X$.

Suppose, by a way of contradiction, that $M$ acts without a global fixed point on $X$.  Consider the double complex 
$$P_* \otimes_{\Z M} \displaystyle{(C_*(X; \Q)\otimes_{\Q} \cN(M))},$$
where $P_*$ is a projective resolution of $\Z$ over $\Z M$. It gives the $E^1$-term spectral sequence
$$E_{p,q}^1=\displaystyle{\bigoplus_{\sigma \in \Sigma_p}}H_q(M_{\sigma}; \cN(M))\Longrightarrow
H_{p+q}(\mbox{Tot}_*).$$
According to our hypotheses, each stabiliser is a proper subgroup of $M$ and hence is cyclic. Therefore, $\dim_{\cN(M_S)} E_{p,q}^1 = 0$
when $p>m$ or $q>0$. This implies that  $\mbox{dim}(H_{*}(\mbox{Tot}_*))\leq m$.

The double complex also gives the $E^2$-term spectral sequence
$$E_{p,q}^2= H_p(M; \cN (M))^{b_q}
\Longrightarrow H_{p+q}(\mbox{Tot}_*).$$

\noindent It follows that $\dim_{\cN(M)} E^2_{n, 0}=b_0\cdot \lb_{n}(M)=\infty$ and if $p\ne n, \: p\geq 3$, then 
\begin{align*}
\dim_{\cN(M)} E^2_{p, q}&=\dim_{\cN(M)} H_p(M; \cN (M))^{b_q}, \\
&=b_q\cdot \lb_p(M) \leq  b_q.
\end{align*}
This implies that if $p\ne n, \: p\geq 3$, then $\dim_{\cN(M)} E^i_{p, q}$ is bounded for $q\leq m$ and vanishes for $q> m$. Note that if $i\leq m+1$, then $n-i\geq n-m-1\geq 3$, because by assumption $m\leq n-4$. So, $\dim_{\cN(M)} E^i_{n-i, i-1}$ is bounded for $i\leq m+1$ and vanishes for $i> m+1$. 
Since $d^i_{{n},0}: E^i_{n, 0}\to E^i_{n-i, i-1}$, we obtain
$$\dim_{\cN(M)} E^{i+1}_{n, 0}= \dim_{\cN(M)}  \ker d^i_{{n},0}=\infty.$$
Thus, $\dim_{\cN(M)} E_{n, 0}^{\infty} =\infty$ and  hence, $\mbox{dim}(H_{*}(\mbox{Tot}_*))\geq n$, which contradicts our previous estimate.
\end{proof}

\bibliographystyle{alpha}
\bibliography{petrosyan_refs}

\end{document}